\newtheorem{lemma}{Lemma}[section]
\newtheorem{theorem}{Theorem}[section]
\newtheorem{remark}{Remark}[section]
\numberwithin{equation}{section} \numberwithin{theorem}{section}
\numberwithin{example}{section} \numberwithin{remark}{section}
\numberwithin{figure}{section} \numberwithin{algorithm}{section}
\def\ba{\begin{array}}
\def\ea{\end{array}}
\def\bma{\left(\begin{matrix}}
\def\ema{\end{matrix}\right)}
\def\be{\begin{equation}}
\def\ee{\end{equation}}
\def\dfrac{\displaystyle\frac}
\def\mcF{\mathcal{F}}
\def\mfa{\mathfrak{a}}
\def\mfh{\mathfrak{h}}
\def\mfu{\mathfrak{u}}
\def\mfv{\mathfrak{v}}
\begin{document}

\title{Formation of singularity for the rotating shallow water system}
\author{Yupei Huang}
\address{Department of mathematics, Duke University}
\email{yh298@duke.edu}
\author{Chunjing Xie}
\address{School of mathematical Sciences, Institute of Natural Sciences,
Ministry of Education Key Laboratory of Scientific and Engineering Computing,
and SHL-MAC, Shanghai Jiao Tong Vniversity, 800 Dongchuan Road, Shanghai, China}
\email{cjxie@sjtu.edu.cn}

\date\today
\keywords{rotating shallow water system, formation of singularity, radial symmetric solutions}
\subjclass{}
\begin{abstract}
In this paper, we investigate the formation of singularity for general two dimensional and radially symmetric solutions for rotating shallow water system from different aspects. First, the formation of singularity is proved via the study for  the associated moments for two dimensional solutions. For the radial symmetric solutions, the formation of singularity is established for the initial data with compact support. Finally, the global existence or formation of singularity  for the radial symmetric solutions of the rotating shallow water system are analyzed in detail when the solutions are of the form with separated variables.
\end{abstract}

\maketitle
\section{Introduction and main results}

The rotating shallow water system
\begin{equation}\label{bheq}
\left\{
\begin{aligned}
&\frac{\partial h}{\partial  t}+\frac{\partial (hu)}{\partial x}+\frac{\partial (hv)}{\partial y}=0,\\& \frac{\partial u}{\partial t}+u\frac{\partial u}{\partial x}+v\frac{\partial u}{\partial y}-v=-\frac{\partial h}{\partial x},\\& \frac{\partial v}{\partial t}+u\frac{\partial v}{\partial x}+v\frac{\partial v}{\partial y}+u=-\frac{\partial h}{\partial y},
\end{aligned}
\right.
\end{equation}
is a widely adopted 2D model that describes the behavior of fluid in the regime of large scale geophysical fluid motion under the action of  Coriolis force (\cite{Gill1982, Majda, Pedlosky}). It can also be regarded as
an important extension of the compressible Euler equations with
additional force.  In the system \eqref{bheq},  $h$ denotes the height of the fluid, $u$ and $v$ are the  velocity in $x$ and $y$ directions,  respectively. For mathematical convenience, all physical parameters are scaled to
the unit (cf.\cite{Majda} for detailed discussion on scaling).
The first equation in \eqref{bheq} describes the conservation of the mass, while the second and third equations in \eqref{bheq} result from the conservation of momentum in $x$ and $y$ directions.

There are quite a few studies on the Cauchy problem for \eqref{bheq}, i.e., the system \eqref{bheq} supplemented with the initial conditions:
\begin{equation*}
h(0,x,y)=h_{0}(x,y),\quad
u(0,x,y)=u_{0}(x,y),\quad
v(0,x,y)=v_{0}(x,y).
\end{equation*}
 In \cite{ChTa:SIAM},   the prolonged existence of classic solution was obtained when the rotation is very fast. It is observed in \cite{ChengXie} that  the relative vorticity vanishes all the time if it vanishes initially. Furthermore,  the system \eqref{bheq} with zero relative vorticity  can be written as a Klein-Gordon system with quadratic nonlinearity so that global small solutions can be established with the aid of the method developed in \cite{Klainerman, Shatah}. Global small solutions were also established for one dimensional rotating shallow water system in \cite{ChengXie:SIAM} based on the study for one dimensional Klein-Gordon equation in \cite{Delort:1D}.  Furthermore, the lifespan of two dimensional classical solutions was also obtained in \cite{ChengXie} when the relative vorticity is sufficiently small at the initial time.  For more studies on the effect of rotation, see \cite{LiTa:rotation} and references therein.  A natural question is to know whether the solutions form singularity in finite time if the initial vorticity is not zero.

Note that the one dimensional rotating shallow water system is a typical example for the quasilinear hyperbolic system \cite{Dafermos}.  Inspired by  \cite{Lax, TW, CPZ}, it was showed in \cite{ChengXie:SIAM} that one dimensional rotating shallow system can form singularity in finite time.
For the multidimensional compressible Euler system,
Sideris first gave some sufficient condition on the formation of singularity for the three dimensional compressible Euler system\cite{Sideris:3D:singularity}. This approach was generalized in various setting, see \cite{Rammaha} for two dimensional Euler system, \cite{SiderisWang, HLiWang} for the compressible Euler system with damping, etc. It is natural to know whether one can also prove the similar results as that for the Euler system.  For more study on the rotating shallow water system from various aspects, one may refer to \cite{Babin, Beale, Bresch,ChengMa:zonal, Li} and references therein.

Adapting the method developed in \cite{Sideris:3D:singularity}, we first study the formation of singularity for two dimensional rotating shallow water system.  Although we can't get a Riccati equation for the radial component of the moment (as that  defined in \cite{Sideris:3D:singularity}) to prove the formation of singularity, one of the key observations in this paper is  that the radial component of moment and the angular component of the moment forms a pair of quantities that obey some conservation laws.
Hence the formation of singularity for the rotating shallow water system is a consequence of concentration of energy under some condition of the initial profile.

The first result in this paper can be stated as follows.
\begin{theorem}\label{thm2D1}
	Suppose there exists an $\bar{h}$, such that $(h_0-\bar{h},u_0,v_0)$ have compact support in $B_{R}=\{(x,y): x^2+y^2\leq R^2\}$.  If  $m(0)>0$ and
	  \begin{equation}
P_1(0)^2+(E(0)+P_2(0))^2\geq\pi(R+2\pi\sqrt{\bar h} )^{4}E(0)\|{h_{0}}\|_{L^\infty},
 \end{equation}
  where
\begin{equation*}
P_1(t)=\int_{\mathbf{R}^2}hux+hvy dxdy, \quad  P_2(t)=\int_{\mathbf{R}^2}hvx-huydxdy,
\end{equation*}
and
\begin{equation*}
E(t)=\int_{\mathbf{R}^2} h(u^{2}+v^{2})+h^{2}-\bar{h}^2dxdy,\quad m(t)=\int_{\mathbf{R}^2} h-\bar{h} dxdy,
\end{equation*}
then the classical solutions of \eqref{bheq} lose $C^1$ regularity before $2\pi$.
\end{theorem}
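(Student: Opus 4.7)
The plan is to combine a pair of moment identities that force $(P_1, P_2+E)$ to rotate on a circle of period $2\pi$ with an independent Cauchy--Schwarz bound on $P_1^2+P_2^2$ controlled by the propagation of the compact support, producing a contradiction at some time $t^* \in [0,2\pi]$. Arguing by contradiction, assume the solution remains $C^1$ on $[0,2\pi]$. Mass and energy are conserved, $m(t)\equiv m(0)$ and $E(t)\equiv E(0)$, by the standard multiplier argument (the Coriolis force is orthogonal to the velocity and does no work; subtracting $\bar h^2$ from the integrand of $E$ regularises the otherwise divergent background contribution). The key step is to compute $P_1'$ and $P_2'$ from the conservation form of the momentum equations. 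After integration by parts, the pressure contribution is regularised by the decomposition $h^2 = (h^2-\bar h^2)+\bar h^2$ so that only the compactly supported piece produces a non-trivial integral, and together with the Coriolis cross-terms this yields
\begin{equation*}
P_1'(t) = E + P_2(t), \qquad P_2'(t) = -P_1(t).
\end{equation*}
Setting $\omega(t) := P_2(t)+E$, the pair $(P_1,\omega)$ satisfies $P_1' = \omega$, $\omega' = -P_1$; hence $C := P_1(t)^2+\omega(t)^2$ is conserved, and the trajectory sweeps out the entire circle of radius $\sqrt C$ during the period $[0,2\pi]$.

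Next, I would establish the Cauchy--Schwarz bound. By finite speed of propagation for the symmetric hyperbolic shallow water system (background sound speed $\sqrt{\bar h}$), the perturbation $(h-\bar h,u,v)$ remains supported in $B_{R(t)}$ with $R(t) \le R + t\sqrt{\bar h} \le R + 2\pi\sqrt{\bar h}$ on $[0,2\pi]$. Writing $P_1+iP_2 = \int h(u+iv)(x-iy)\,dxdy$ and applying Cauchy--Schwarz on the compactly supported integrand gives
\begin{equation*}
P_1^2 + P_2^2 \leq \left(\int h(u^2+v^2)\,dxdy\right)\left(\int_{B_{R(t)}} h(x^2+y^2)\,dxdy\right).
\end{equation*}
The first factor is bounded by $E$ because $\int(h^2-\bar h^2) = \int(h-\bar h)^2 + 2\bar h\,m(0) > 0$ under the hypothesis $m(0)>0$. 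For the second factor, mass conservation gives $\int_{B_{R(t)}} h = m(0) + \bar h\pi R(t)^2$, and $m(0) = \int_{B_R}(h_0-\bar h) \le (\|h_0\|_{L^\infty}-\bar h)\pi R^2 \le (\|h_0\|_{L^\infty}-\bar h)\pi R(t)^2$ (valid because $m(0)>0$ forces $\|h_0\|_{L^\infty}>\bar h$) yields $\int_{B_{R(t)}} h \le \|h_0\|_{L^\infty}\pi R(t)^2$, hence $\int_{B_{R(t)}} h(x^2+y^2) \le \pi R(t)^4 \|h_0\|_{L^\infty}$. Altogether,
\begin{equation*}
P_1(t)^2 + P_2(t)^2 \leq \pi R(t)^4 E\|h_0\|_{L^\infty} \leq \pi(R + 2\pi\sqrt{\bar h})^4 E\,\|h_0\|_{L^\infty}
\end{equation*}
for every $t \in [0,2\pi]$.

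Since $(P_1,\omega)$ traces the full circle of radius $\sqrt C$ during $[0,2\pi]$, some $t^* \in [0,2\pi]$ satisfies $P_1(t^*)=0$ and $\omega(t^*)=-\sqrt C$, so $P_2(t^*) = -\sqrt C - E$ and
\begin{equation*}
P_1(t^*)^2 + P_2(t^*)^2 = (\sqrt C + E)^2 > C \geq \pi(R+2\pi\sqrt{\bar h})^4 E\|h_0\|_{L^\infty}
\end{equation*}
by the hypothesis of the theorem, contradicting the upper bound of the preceding paragraph. Hence no $C^1$ solution can exist throughout $[0,2\pi]$. The main obstacle I anticipate is the derivation of the coupled ODE for $(P_1,P_2)$: verifying that the convective contribution $\int h(u^2+v^2)$ combines with the regularised pressure contribution $\int(h^2-\bar h^2)$ to reproduce exactly the conserved energy $E$, and that the Coriolis cross-terms generate the precise linear coupling making $(P_1, P_2+E)$ a harmonic oscillator of period $2\pi$. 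This structural coupling is the feature distinguishing the rotating system from the non-rotating Euler setup of Sideris, where only a Riccati-type estimate for the radial moment is available.
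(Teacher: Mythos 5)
Your proof is correct and follows essentially the same route as the paper: the harmonic oscillator structure $P_1'=E+P_2$, $P_2'=-P_1$, conservation of $m$ and $E$, finite speed of propagation giving the cone $S(t)$, the Cauchy--Schwarz bound $P_1^2+P_2^2 \le (\int h(u^2+v^2))(\int h|x|^2)$, and the Jensen-type positivity $\int(h^2-\bar h^2)>0$ from $m(0)>0$. The only cosmetic difference is your choice of time: the paper evaluates at the $T$ maximizing $|P_1|$, obtaining $P_1(T)^2 = C$, whereas you evaluate at the time $t^*$ with $P_1=0$ and $\omega=-\sqrt{C}$, obtaining the slightly larger quantity $P_1^2+P_2^2=(\sqrt{C}+E)^2>C$; both yield the same contradiction under the stated hypothesis, and your complex-variable form $P_1+iP_2=\int h(u+iv)(x-iy)\,dx\,dy$ is a tidy way to package the Cauchy--Schwarz step.
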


There are a few remarks in order.
\begin{remark}
The quantity $P_1$ is what Sideris called ``radial component of moment", while the quantity $P_2$ can be regarded as  the ``angular component of moment".
 This result as well as  \cite[Theorem 1]{Sideris:3D:singularity} is a consequence of the finite speed of propagation of the system and the situation that the majority of the mass is moving outside while the finite speed of propagation automatically sets up a barrier outside of the wave front. Therefore, as long as the outward velocity in the interior is large enough, formation of singularity is inevitable, which results in breakdown of $C^1$ solution.
\end{remark}
\begin{remark}\label{rmk1}
		Suppose $ m(0)\geq 0$, while $P_{1}(0)^2+(E(0)+P_{2}(0))^2\leq(R+2\pi\sqrt{\bar h} )^{4}E(0)\|h_0\|_{L^\infty}$. Define
\begin{equation}\label{scale1}
	h_{0}^{(\lambda)}(x,y)=\frac{1}{\lambda}h_{0}(x,y),\quad  u_{0}^{(\lambda)}(x,y)=\lambda u_{0}(x,y),\quad v_{0}^{(\lambda)}(x,y)=\lambda v_{0}(x,y).
	\end{equation}
If $\lambda$ is sufficiently large, there exists a  constant $C$ such that
\begin{equation*}
	C^{-1}\leq P^{(\lambda)}_{1}(0), \,
	P^{(\lambda)}_{2}(0), \,
	\frac{E^{(\lambda)}(0)}{\lambda}, \,
	R+2\pi \sqrt{\frac{\bar h}{\lambda}} \leq C,
	\end{equation*}
	where $P^{(\lambda)}_1$, $P^{(\lambda)}_2$, $E^{(\lambda)}$ are the associated quantities for the scaled initial data.
Therefore, one has
	 \begin{equation*}
	 P^{(\lambda)}_{1}(0)^2+(E^{(\lambda)}(0)+P^{(\lambda)}_{2}(0))^2\geq\pi\left(R+\sqrt{\frac{\bar h}{\lambda}}\right)^{4}E^{(\lambda)}(0)\|{h^{(\lambda)}_{0}}\|_{\infty}.
	 \end{equation*}The above argument shows that if the initial profile has large concentration of energy, the system is likely to blow up in finite time.
	\end{remark}
	\begin{remark}
The condition $m(0)>0$ plays a crucial role in Theorem \ref{thm2D1}. Indeed,   $(h_0, u_0, v_0)= ( \bar h, 0, 0)$, with some constant $\bar h$ clearly satisfies
		\begin{equation}
	m(0)=0 \quad \text{and}\quad P_{1}(0)^2+(E(0)+P_{2}(0))^2=0\geq 0=\pi(R+2\pi\sqrt{\bar h} )^{4}E(0)\Arrowvert{h_{0}}\Arrowvert_{L^\infty}.
	\end{equation}
	However, there is a unique global classical solution $(h, u, v)= ( \bar h, 0, 0)$.  Moreover, we can find \begin{equation*}
		\left\{
	\begin{aligned}
	&m(0)=0,\\
	&P_{1}(0)^2+(E(0)+P_{2}(0))^2\geq\pi(R+2\pi\sqrt{\bar h} )^{4}E(0)\Arrowvert{h_{0}}\Arrowvert_{L^\infty}.
	\end{aligned}
	\right.
	\end{equation*} only if  there exists a constant $\bar h$ such that $h_0\equiv \bar h$,  $u\equiv 0$,  $v\equiv 0$.
\end{remark}

\begin{remark}
	Note the system \eqref{bheq} enjoys a scaling symmetry, i.e., $(h_\lambda, u_\lambda, v_\lambda)$ defined by 
	 \begin{equation}\label{defhlambda}
	h_{\lambda}(x,y,t)=\lambda^{2}h\left(\frac{x}{\lambda},\frac{y}{\lambda},t\right),
	\,\, u_{\lambda}(x,y,t)=\lambda u\left(\frac{x}{\lambda},\frac{y}{\lambda},t\right),\,\, v_{\lambda}(x,y,t)=\lambda v\left(\frac{x}{\lambda},\frac{y}{\lambda},t\right)
	\end{equation}
is a solution of \eqref{bheq} so long as $(h,u,v)(x,y, t)$ is a solution of \eqref{bheq}. This, together with the transformation in Remark \ref{rmk1}, shows that there exists a solution with small initial data (of $C^{0}$ norm), which forms singularity in finite time. For $\lambda \geq 1$, under the transformation of \eqref{defhlambda}, the $C^1$ norm does not change much and the configuration of the initial data tends to be localized.
\end{remark}

A particular interesting case is to study  the radial symmetric solutions of the rotating shallow water system.
Suppose that $(h,u,v)$ is of the form
  \begin{displaymath}
 h(x,y, t)=h(r,t),\quad u(x,y,t)=\frac{U(r,t)x}{r}-\frac{V(r,t)y}{r}\quad \text{and}\quad v(x,y,t)=\frac{V(r,t)x}{r}+\frac{U(r,t)y}{r},
\end{displaymath}
where $r=\sqrt{x^2+y^2}$ and $U$ and $V$ are radial and angular velocity.
Then $(h,  U, V)$ satisfies
\begin{equation}\label{rotation}
\left\{
\begin{aligned}
&\frac{\partial h}{\partial t}+\frac{\partial (hU) }{\partial r}+\frac{hU}{r}=0,\\
&
\frac{\partial U}{\partial t}+U\frac{\partial U}{\partial r}+\frac{\partial h}{\partial r}-V-\frac{V^2}{r}=0,\\
&
\frac{\partial V}{\partial t}+U\frac{\partial V}{\partial r}+U+\frac{VU}{r}=0.
\end{aligned}
\right.
\end{equation}

The system \eqref{rotation} is a typical quasilinear hyperbolic system with source terms which could be singular near the origin. Different from the methods  in \cite{Lax, John} which heavily rely on the analysis along the characteristics,
Sideris (\cite{Sideris:hyp:singularity})  proved the formation of singularity for hyperbolic system  via the energy method. Inspired by \cite{Sideris:hyp:singularity}, we have the following results on the formation of singularity for the Cauchy problem for \eqref{rotation}, i.e., the system \eqref{rotation} together with the initial data
\begin{equation}\label{rotationIC}
(h, U, V)(0, r)=(h_0, U_0, V_0)(r).
\end{equation}

\begin{theorem}\label{mainthm2}
Assume that there exists an $ \bar{h}>0$ such that $(h_0-\bar{h},U_0, V_0)\in C_c^1((0, \infty))$ and denote $\text{supp} (h_0-\bar h, U_0, V_0)\subset [\underline{\beta}, \bar \beta]$ with $\underline{\beta}$, $\bar\beta>0$. Suppose that
\begin{equation}\label{assu0}
h_0(r) \geq \underline{h}>0 \quad \text{and}\quad   U_0(r) \leq 0 \text{ for all } r\in [\underline{\beta}, \bar \beta],
\end{equation}   and that
\begin{equation}\label{assul1}
\|U_0\|_{L^\infty(0, \infty))}\leq A \|U_0\|_{L^1(0, \infty)) }
\end{equation}
for some positive  constant $A$ independent of $U_0$. If, in addition,
 \[
 \Arrowvert U_0 \Arrowvert_{L^\infty}\geq C\]
  for some constant $C$ depending on $A$, $\underline{h}$, $\|V_0\|_{L^\infty}$, $\underline{\beta}$ and $\bar\beta$,   the classical solutions of the problem  \eqref{rotation}-\eqref{rotationIC} must form singularity in finite time.
\end{theorem}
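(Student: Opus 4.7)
The plan is to argue by contradiction along the lines of \cite{Sideris:hyp:singularity}: assuming a $C^1$ solution exists on $[0,T]$, I will derive an upper bound on $T$ that is inversely proportional to $\|U_0\|_{L^\infty}$, producing a contradiction once the latter is large enough.

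First I would set up the short-time geometric framework. Finite speed of propagation (the characteristic speeds being $U$ and $U\pm\sqrt h$) confines the perturbation to $\mathrm{supp}(h-\bar h,U,V)(\cdot,t)\subset[\beta_-(t),\beta_+(t)]$ with $\beta_-(t)\geq\underline\beta/2$ and $\beta_+(t)-\beta_-(t)\leq 2\bar\beta$ on a time window $[0,T_0]$ controlled by $\underline\beta$, $\bar\beta$ and the a priori $L^\infty$ norms of the classical solution. The third equation of \eqref{rotation} rewrites as $(rV)_t+U(rV)_r=-rU$, which, integrated along the radial characteristic $\dot r = U$, yields a uniform bound $\|V(\cdot,t)\|_{L^\infty}\leq C_V$ on $[0,T_0]$ depending only on $\|V_0\|_{L^\infty}$, $\bar\beta$ and the a priori quantities. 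Combined with a bootstrap on the continuity equation, the sign hypothesis $U_0\leq 0$ can be propagated short-time to ensure $U(\cdot,t)\leq 0$ on its support and $h(\cdot,t)\geq\underline h/2$ on some subinterval $[0,T_0']$.

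The heart of the argument is a Riccati-type obstruction using the pair of functionals
\begin{align*}
F(t) &= -\int_0^\infty U(r,t)\,dr,\\
J(t) &= \int_0^\infty rU(r,t)\,dr.
\end{align*}
Testing the radial momentum equation of \eqref{rotation} against $1$ and against $r$, and using the compact support to eliminate boundary terms, yields
\begin{align*}
F'(t) &= -\int V\,dr -\int\frac{V^2}{r}\,dr,\\
J'(t) &= \tfrac12\int U^2\,dr + \int(h-\bar h)\,dr + \int rV\,dr + \int V^2\,dr.
\end{align*}
Hypothesis \eqref{assul1} gives $F(0) = \|U_0\|_{L^1} \geq \|U_0\|_{L^\infty}/A$, and the bound on $F'$ implies $F(t)\geq F(0)/2$ on a window $[0,T_1]$ whose length is proportional to $F(0)$. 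Cauchy--Schwarz then provides $\int U^2\,dr \geq F(t)^2/(\beta_+-\beta_-)\geq F(0)^2/(8\bar\beta)$ on $[0,T_1]$, while the remaining three terms in $J'$ are bounded in absolute value by a constant $C_1$ depending only on $C_V$, $\underline\beta$, $\bar\beta$ and the a priori bound on $h$. Once $\|U_0\|_{L^\infty}$ exceeds the constant $C$ in the statement (chosen so that $F(0)^2/(16\bar\beta)\geq C_1$), we obtain $J'(t)\geq F(0)^2/(32\bar\beta)$, hence $J(t)\geq J(0)+F(0)^2\,t/(32\bar\beta)$. On the other hand, the propagated sign $U\leq 0$ forces $J(t)\leq 0$ together with $|J(0)|\leq\bar\beta F(0)$. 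These two constraints are incompatible past $t^\ast = 32\bar\beta^2/F(0)$, so the $C^1$ solution must break down before $t^\ast$.

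The principal obstacle is propagating the sign conditions $U\leq 0$ and $h\geq\underline h/2$ throughout $[0,t^\ast]$. Along the Lagrangian trajectory the radial momentum equation reads $\dot U = -h_r + V + V^2/r$, whose right-hand side is a priori bounded only through the $C^1$ norm of the classical solution, so in principle $U$ could become positive before the Riccati mechanism takes hold. The required bootstrap combines characteristic control with the energy identity for $\int h(U^2+V^2)r\,dr + \int(h-\bar h)^2 r\,dr$ and exploits the short blowup timescale $t^\ast\sim A/\|U_0\|_{L^\infty}$: for $\|U_0\|_{L^\infty}$ above the stated threshold, the accumulated change of $U$ along characteristics remains too small to disrupt the signs. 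This balance between the bounded forcing and the shortness of $t^\ast$ is precisely what forces the quantitative lower bound $\|U_0\|_{L^\infty}\geq C(A,\underline h,\|V_0\|_{L^\infty},\underline\beta,\bar\beta)$ in the theorem.
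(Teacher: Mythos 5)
Your proposal takes a genuinely different route from the paper's: you test the (non-conservative) momentum equation against $1$ and $r$, producing two moments $F(t)=-\int U\,dr$ and $J(t)=\int rU\,dr$, and you aim for a contradiction between linear growth of $J$ (driven by the $\tfrac12\int U^2$ term) and the bound $J\le 0$. The paper instead multiplies the \emph{conservative-form} momentum equation by a weight $w=e^{-r}(T-t)^2\mu(r)$ and works with a doubly time-integrated functional $F(T)=-\int_0^T\!\!\int hU\,e^{-r}(T-t)^2\mu\,dr\,dt$, arriving by Young's and Jensen's inequalities at a Riccati-type inequality
$F'(T)\ge\frac{\alpha T^2}{5}+\frac{3}{2}\varphi(T)\bigl(F(T)/\varphi(T)\bigr)^{4/3}$,
where the $4/3$-power term is automatically nonnegative. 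This difference is not cosmetic: it is precisely what lets the paper avoid the step that breaks in your argument.

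The genuine gap is the assertion that $U(\cdot,t)\le 0$ (hence $J(t)\le 0$) persists on $[0,t^\ast]$. Along a radial characteristic the velocity satisfies $\dot U=-\partial_r h+V+V^2/r$, and the forcing $\partial_r h$ is controlled only by the $C^1$ norm of the solution — which is exactly the quantity you are trying to show blows up. The short time scale $t^\ast\sim 1/\|U_0\|_{L^\infty}$ does not rescue this: as $\|U_0\|_{L^\infty}\to\infty$ the $C^1$ norm of the initial data also grows, and there is no a priori bound on $\partial_r h$ independent of it. The alternative you gesture at — bounding $|J|$ via the energy $\int\bigl(h(U^2+V^2)+(h-\bar h)^2\bigr)r\,dr$ — would require a \emph{lower} bound on $h$ over the support for all times in the window, and only $h\ge 0$ is available (as the paper shows via the Lagrangian mass relation $\tilde h\,\tilde X\,\partial_{X_0}\tilde X = h_0 X_0$); a strict positive lower bound is not propagated. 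By contrast, the paper keeps the factor $h$ attached to $U$ throughout ($\alpha=-\int h_0U_0e^{-r}\mu\,dr$, integrand $(hU)^{4/3}$), so the favorable term is convex and sign-free, the ratio $\mathcal F=F/\varphi$ obeys $\mathcal F'\ge\tfrac32\mathcal F^{4/3}-(a+3\nu)\mathcal F$, and the only sign information needed is $U_0\le 0$, at $t=0$. So as written your argument does not close; a genuinely different device (the weighted double time integral and the $4/3$-power Riccati inequality, or some substitute) is needed to remove the dependence on sign propagation and on a lower bound for $h$.
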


There are several remarks on Theorem \ref{mainthm2}.
\begin{remark}
In fact, the proof of Theorem \ref{mainthm2} in Section \ref{sec3} also gives the upper bound of the lifespan of the classical solutions. We can also use Theorem \ref{thm2D1} to prove the formation of singularity for some cases which satisfy the assumptions of  both Theorem \ref{mainthm2} and Theorem \ref{thm2D1}. The upper bound of lifespan obtained by Theorem \ref{thm2D1} is $2\pi$, while the upper bound of the lifespan obtained by Theorem \ref{mainthm2} is much smaller. 
\end{remark}
\begin{remark}
The assumption on negativity of $U_0$ in \eqref{assu0} is used to guarantee $\alpha$ defined in \eqref{defalpha} to be positive, so this condition is not an optimal condition. Theorem \ref{mainthm2} is proved by the energy method and $L^\infty$ bound for $U_0$ is controlled  by the integral at the initial data (cf. \eqref{assumption1}). Of course, it is easy to see there are a large class of initial data satisfying \eqref{assu0}-\eqref{assul1}.
\end{remark}

\begin{remark}
 In order to get the formation of singularity for one dimensional quasilinear hyperbolic system, the sign of the derivative is crucial (\cite{Lax, John}).  This is the reason why we study the case where there is a negative bump for the radial velocity.
 \end{remark}

\begin{remark}
In Theorem \ref{mainthm2}, some portion of the mass is moving towards the origin with large velocity and the angular velocity is not large so that it doesn't prevent the particle paths from collision near the origin.  Thus the solution becomes singular in a  short period of time.
\end{remark}

Note that the system \eqref{rotation} away from the origin is  a typical one dimensional hyperbolic system. It is interesting to study the solutions of \eqref{rotation} with the form of separated variables. In fact, there are some studies on this special kind of solutions for rotating shallow water system or compressible Euler system. A special class of these kind of solutions for rotating shallow water system has been studied in \cite{CHESN}. The formation of singularity  for the solutions with separated variables  for the compressible Euler system was investigated in \cite{LiWang}.

The solutions under the form of separated variables can be written as
\begin{equation}\label{separation}
 (h,U, V)=(\mfh(t) \mfa_1(r), \mfu(t) \mfa_2(r),\mfv(t) \mfa_3(r)).
 \end{equation}
 In fact, we have the following results for the rotating shallow water system.
\begin{theorem}\label{mainthm3}
If the solution of the system \eqref{rotation} has the form \eqref{separation}, then the solution is either time periodic or  blow up in a finite time.
\end{theorem}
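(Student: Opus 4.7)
The plan is to exploit the rigidity of the separated ansatz to reduce \eqref{rotation} to an autonomous planar ODE with two first integrals, and then to read the stated dichotomy off the resulting one-dimensional quadrature.

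I would begin by plugging $(h,U,V)=(\mfh(t)\mfa_1(r),\,\mfu(t)\mfa_2(r),\,\mfv(t)\mfa_3(r))$ into the three equations of \eqref{rotation} and requiring that each identity separate into a pure $t$-factor times a pure $r$-factor. The third equation couples three a priori independent time factors $\mfv'$, $\mfu\mfv$, $\mfu$ and thereby forces $\mfa_3\propto\mfa_2$ and $\mfa_3'+\mfa_3/r=\mathrm{const}$, so regularity at the origin gives $\mfa_2=\mfa_3=r$ up to a scaling. The pressure term in the $U$-equation then forces $\mfa_1'\propto r$, i.e.\ $\mfa_1=\alpha r^2+\beta$, and plugging this into continuity produces the two relations $\mfh'=-4\mfh\mfu$ and $\mfh'=-2\mfh\mfu$ from the $r^2$- and $r^0$-coefficients respectively, compatible only if $\alpha\beta=0$. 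Up to trivial solutions this leaves two normalised cases: $\mfa_1\equiv 1$ (spatially uniform height) or $\mfa_1=r^2$ (parabolic bowl), both with $\mfa_2=\mfa_3=r$.

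Next I introduce the absolute-vorticity variable $\mathfrak{w}=2\mfv+1$, which turns the third equation into $\mathfrak{w}'=-2\mfu\mathfrak{w}$. Combined with $\mfh'/\mfh=-2k\mfu$ (where $k=1$ or $k=2$ depending on the case), this already gives one first integral $C_1:=\mfh/\mathfrak{w}^k$. Using $\mfv(\mfv+1)=(\mathfrak{w}^2-1)/4$, the $U$-equation collapses to
\[
\mfu'=-\mfu^2+C_2\mathfrak{w}^2-\tfrac14,\qquad \mathfrak{w}'=-2\mfu\mathfrak{w},
\]
with a constant $C_2$ determined by $C_1$ and the case. A direct search for a first integral of the form $\mfu^2 B(\mathfrak{w})+G(\mathfrak{w})$, annihilating the $\mfu^3$ and $\mfu$ terms in its derivative, forces $B(\mathfrak{w})=1/\mathfrak{w}$ and $G(\mathfrak{w})=C_2\mathfrak{w}+1/(4\mathfrak{w})$, producing the conserved energy
\[
E=\frac{\mfu^2}{\mathfrak{w}}+C_2\mathfrak{w}+\frac{1}{4\mathfrak{w}}.
\]
Eliminating $\mfu$ via $\mathfrak{w}'=-2\mfu\mathfrak{w}$ then reduces the flow to the single quadrature
\[
(\mathfrak{w}')^2=P(\mathfrak{w}):=-4C_2\mathfrak{w}^4+4E\mathfrak{w}^3-\mathfrak{w}^2.
\]

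The dichotomy in Theorem \ref{mainthm3} now follows from a phase-portrait analysis of the quartic $P$. When $C_2>0$, $P$ has negative leading coefficient, so $\{P\ge 0\}$ is a bounded interval $[\mathfrak{w}_-,\mathfrak{w}_+]\subset(0,\infty)$ whose endpoints are simple roots; hence $\mathfrak{w}$ oscillates between them with finite period $T=2\int_{\mathfrak{w}_-}^{\mathfrak{w}_+}d\mathfrak{w}/\sqrt{P(\mathfrak{w})}$, and $(\mfh,\mfu,\mfv)$ is $T$-periodic (degenerating to the equilibrium $\mfu=0,\ \mathfrak{w}=1/(2\sqrt{C_2})$ when the two roots coalesce). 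When $C_2\le 0$ instead $P(\mathfrak{w})\gtrsim\mathfrak{w}^3$ at infinity, so $\mathfrak{w}$ escapes monotonically to $+\infty$; since $\int^{\infty}d\mathfrak{w}/\sqrt{P(\mathfrak{w})}$ converges, this escape occurs in finite time, and then $\mfh=C_1\mathfrak{w}^k$ and $|\mfu|\sim\sqrt{-C_2}\,\mathfrak{w}$ blow up with it. The main obstacle I anticipate is spotting the correct integrating factor $1/\mathfrak{w}$ together with the change of variable $\mathfrak{w}=2\mfv+1$: without them the reduced planar system is a generic quadratic vector field with no visible Hamiltonian structure, while once they are in hand the remainder of the argument is straightforward bookkeeping on the quartic $P$.
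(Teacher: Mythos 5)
Your proposal takes a genuinely different route from the paper and is essentially correct; it is worth recording what each approach buys.

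The paper first normalises $V=r\mfv(t)$ via the scaling symmetry, then uses the transport of the relative vorticity $\Theta=(\partial_r V+V/r+1)/h$ to split into $\mfv\equiv-\tfrac12$ (handled by citing Chesnokov) and $\mfv\neq-\tfrac12$. In the second case it writes $\mfv=\pm e^{g}-\tfrac12$, which produces a third-order scalar ODE for $g$; setting $\xi=g'$, $\eta=g''$ it finds the invariant $\kappa=(\xi^2+1-2\eta)/(\xi^2+1-\eta)^2$ and reads the periodic/blowup dichotomy off the sign of $\kappa_0$. You instead keep the algebraic variable $\mathfrak{w}=2\mfv+1$, use continuity to get $C_1=\mfh/\mathfrak{w}^k$, and then produce the second integral $E=\mfu^2/\mathfrak{w}+C_2\mathfrak{w}+\tfrac{1}{4\mathfrak{w}}$ by an integrating-factor search, which collapses the planar system to the quadrature $(\mathfrak{w}')^2=\mathfrak{w}^2(-4C_2\mathfrak{w}^2+4E\mathfrak{w}-1)$. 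The two reductions are equivalent — indeed $e^{2g}\propto\vartheta^2$ is exactly what makes your $C_2$ a constant multiple of the paper's $\kappa_0$, so the sign conditions agree — but your version avoids the exponential change of variable and the special subcase $\vartheta(0)=0$ (which for you is just $C_2<0$), and it makes the Hamiltonian structure of the reduced flow explicit. It also exposes the second $r$-profile $\mfa_1\equiv\mathrm{const}$, for which $C_2=\tfrac14>0$ always and the motion is automatically periodic; the paper tacitly works only with $\mfa_1\propto r^2$. The paper's parametrisation, on the other hand, is more convenient for the quantitative part of Theorem \ref{mainthm3r} (explicit period $2\pi$ and the blowup rate \eqref{blowuprate}), which your outline does not address but is not required by the statement as given.

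One genuine gap: your two first integrals $C_1$ and $E$ are singular on the invariant set $\{\mathfrak{w}=0\}$, i.e.\ $\mfv\equiv-\tfrac12$. You correctly observe that $\mathfrak{w}=0$ is a double root of $P$ so that orbits with $\mathfrak{w}(0)\neq 0$ never reach it, but orbits starting there are outside your reduction and still need to be classified — on that set $\mfh$ and $\mfu$ still evolve by a nontrivial planar system. This is exactly the branch the paper disposes of by Lemma 4.1 together with the reference to Chesnokov's earlier analysis; you should either cite the same result or supply the (short) separate first integral for the $\mathfrak{w}\equiv 0$ flow, e.g.\ $\mfu^2\mfh^{-1/2}-2\mfh^{1/2}+\tfrac14\mfh^{-1/2}$ in the case $\mfa_1=r^2$.
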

The detailed presentation for Theorem \ref{mainthm3} is stated in Theorem \ref{mainthm3r}.

The rest of the paper is organized as follows. Theorem \ref{thm2D1} is proved in Section \ref{sec2}.  We give the proof of Theorem \ref{mainthm2} in Section \ref{sec3}. The detailed formulation for the governing equations of  the solutions under the form \eqref{separation} for the system \eqref{rotation} is presented in Section \ref{sec4}. Furthermore, the global existence of time periodic and the blowup  of the solutions are studied in detail in Section \ref{sec4}.

\section{Formation of singularity for two dimensional rotating shallow water system}\label{sec2}

The main objective of this section is to prove Theorem \ref{thm2D1}.
\begin{proof}[Proof of Theorem \ref{thm2D1}]
Integration by parts yields
	\begin{equation}\label{conservation of momentum}
	\left\{
	\begin{aligned}
	&P_{1}'(t)=E(t)+P_{2}(t),\\
	&P_{2}'(t)=-P_{1}(t).
	\end{aligned}
	\right.
	\end{equation}
Similarly, one has
	\begin{equation}\label{conservation of mass}
E(t)= E(0) \quad\text{and}\quad  m(t)=m(0).
\end{equation}
It follows from \eqref{conservation of momentum} and  \eqref{conservation of mass} that there exists $p$ and $q$ such that
	\begin{equation}
	\left\{
	\begin{aligned}
	&P_{1}(t)=p\sin(t)+q\cos(t),\\
	&P_{2}(t)=- E(0)+p\cos(t)-q\sin(t),
		\end{aligned}
	\right.
	\end{equation}
where $q=P_{1}(0)$ and $p=-P_{2}(0)+ E(0)$.

Since $(h_0-\bar h, u, v)$ has the support in $B_R$, the straightforward computations for the characteristic speed for \eqref{bheq} show that the support of $(h-\bar h, u, v)$ must be contained in $S(t)=\{(x,y)\leq R+\sigma t\}$ with $\sigma =\sqrt{\bar h}$. Hence  there exists a $T\in[0,2\pi]$ such that
	\begin{equation}\label{eq2.4}
	\begin{split}
	P_{1}(0)^2+( E(0)+P_{2}(0))^2&=p^2+q^2=P_{1}^2(T)=\left(\int_{S(T)} hux+hvy dxdy\right)^2\\&\leq \left(\int_{S(T)} \vert hux+hvy \vert  dxdy\right)^2=J_1(T).
	\end{split}
	\end{equation}
Applying Cauchy-Schwarz inequality and H\"{o}lder inequality yields
	\begin{equation}\label{eq2.5}
	\begin{split}
	J_1(t)&=\left(\int_{S(t)} \vert hux+hvy \vert dxdy\right)^2\\&\leq\left(\int_{S(t)}h(u^2+v^2)^{\frac{1}{2}}(x^2+y^2)^{\frac{1}{2}}dxdy\right)^2\\&\leq\int_{S(t)}h(u^2+v^2)dxdy\int_{S(t)}h(x^2+y^2)dxdy=J_2(t)J_3(t),
	\end{split}
	\end{equation}
where
	\begin{equation*}
	J_2(t)=\int_{S(t)}h(u^2+v^2)dxdy=E(t)+\int_{S(t)} (\bar{h}^2-h^2))dxdy
	\end{equation*}
	and
	\begin{equation*}
	\begin{split}
	J_3(t)&=\int_{S(t)}h(x^2+y^2)dxdy.
	\end{split}
	\end{equation*}	
Using H\"{o}lder inequality yields
	\begin{equation}\label{eq2.6}
	\begin{aligned}
	\int_{S(t)} h^2dxdy\geq  &\frac{\left(\int_{S(t)}hdxdy\right)^2}{\vert{S(t)}\vert}=\frac{\left(\int_{S(t)}\bar{h}dxdy+m(0)\right)^2}{\vert{S(t)}\vert}\\
	> & \bar{h}^2 |S(t)|=\int_{S(t)} \bar{h}^2 dxdy,
	\end{aligned}
	\end{equation}
	where $\vert S(t)\vert$ denotes the area of $S(t)$ and $m(0)>0$ has been used. Thus one has
	\begin{equation*}
	J_2(T)=E(T)+\int_{S(T)} (\bar{h}^2-h^2)dxdy< E(T)=E(0),
	\end{equation*}
	where \eqref{eq2.6} is used to get the strict inequality.
Furthermore, for $T\in [0, 2\pi]$, one has
	\begin{equation*}
	\begin{split}
	J_3(T)&=\int_{S(T)}h(x^2+y^2)dxdy\leq(R+\sigma T)^2\int_{S(T)}hdxdy=(R+\sigma T)^2\int_{S(T)}h_{0}dxdy\\
	&\leq\pi(R+2\pi\sigma )^{4}\|{h_{0}}\|_{L^\infty}.
	\end{split}
	\end{equation*}
Therefore, these two estimates together with \eqref{eq2.4} and \eqref{eq2.5}  yield
	\[
	P_{1}(0)^2+(E(0)+P_{2}(0))^2<\pi(R+2\pi\sigma )^{4}E(0)\|{h_{0}}\|_{L^\infty}.
	\]
	 This leads to a contraction so that the proof of the theorem is completed.
	\end{proof}

\section{Formation of singularity for radially symmetric solutions}\label{sec3}

The objective of this section is to prove Theorem \ref{mainthm2}.

\begin{proof}[Proof of Theorem \ref{mainthm2}] The proof is divided into 4 steps.

{\it Step 1. Preliminaries.} The straightforward calculations show that the characteristic speeds of the system \eqref{rotation} are
\[
\lambda_1= U-\sqrt{h}, \quad \lambda_2=U, \quad \lambda_3=U+\sqrt{h}.
\]
Hence if the solutions have compact support, then the finite propagation speed of the support is $\sigma=\bar{h}^{\frac{1}{2}}$.
Without loss of generality, we assume that  the support of $(h_0-\bar{h},U_0,V_0)$ is contained in the interval $A_0$ defined as
 \[
 A_0=\left\{r: \frac{1}{2} \leq r\leq 1\right\}.
\]
The proof can be proceeded with minor modifications for the case where $\text{supp}(h_0-\bar{h}, U_0, V_0)$ is contained in $[\underline{\beta}, \bar{\beta}]$ with general $\underline{\beta}$, $\bar\beta>0$. Let $\epsilon>0$ be a small positive constant and
denote
\begin{equation}\label{defnu}
A(t)=\left\{r\mid r\in\left[\frac{1}{2}-\sigma t,1+\sigma t\right]\right\},\quad \nu = \Arrowvert U_0 \Arrowvert_{L^\infty}^{\frac{1}{3}-\epsilon},\quad \text{and}\quad
\bar T= \dfrac{2}{\nu}.
\end{equation}
Assume that  $\Arrowvert U_0 \Arrowvert_{L^\infty}$ is large enough so that
\[
A(\bar{T}) \subseteq \{r\mid \frac{1}{3}\leq r \leq \frac{4}{3} \}.
\]
The main goal is to prove that the system \eqref{rotation} forms singularity before $t=\bar{T}$.

{\it Step 2. Energy estimate.}
Assume that the system \eqref{rotation} has a classical solution in $ [0,\bar{T}]\times \mathbb{R}$.
Then  for any fixed $T\in [0,\bar{T}]$, let $w(r, t; T)$ be defined as follows
		\begin{equation*}
	w(r,t; T)=\left\{
	\begin{aligned}
	&0, \quad &t>T,\\
	&e^{-r}(T-t)^2\mu(r), \quad &t\in[0,T],
	\end{aligned}
	\right.
	\end{equation*}
	where $\mu(r) \in C_c^\infty (0, \infty)$ satisfies $\mu(r)\equiv 1$ for  $ r\in (\frac{1}{3}, \frac{4}{3})$ and $\mu \in [0,1]$ for $r\in (0, \infty)$.
	Multiplying the second equation in \eqref{rotation} with $w(t)$ yields	
	\begin{equation}\label{conservation of radial momentum with weight}
	\begin{aligned}
	&- \int_{0}^{T}\int_{A(t)}2(T-t)e^{-r}h {U} \mu(r)\ drdt =- \int_{A(0)}h_0{U_0}T^2e^{-r} \mu(r)dr- I(T)\\
	&\quad +
	\int_{0}^{T}\int_{A(t)}(T-t)^2e^{-r}\left(h {U}^2+\frac{h{U}^2}{r}+\frac{h^2}{2}\right) \mu(r)dr dt,
	\end{aligned}
	\end{equation}
		where $I(t)=\sum_{i=1}^3 I_i(t)$ with
		\begin{equation*}
	I_1(T)=\int_{0}^{T}\int_{A(t)}\frac{hV^2e^{-r}}{r}(T-t)^2 \mu(r) drdt,
	\quad I_2(T)=\int_{0}^{T}\int_{A(t)}hVe^{-r}(T-t)^2 \mu(r) drdt,
	\end{equation*}
	and
	\begin{equation}\label{defalpha}
I_3(T)=\int_{0}^{T}\int_{\mathbb{R}^{+}\setminus A(t)}\left(\frac{h^2}{2} +hU^2\right)e^{-r}(T-t)^2\mu_rdrdt.		
	\end{equation}	
	
	Let
\[
\alpha=-\int_{A(0)}h_0 {U_0}e^{-r}\mu(r)dr\quad \text{and}\quad
	F(T)=-\int_{0}^{T}\int_{A(t)}h {U}e^{-r}(T-t)^2\mu(r)drdt.
\]
According to \eqref{conservation of radial momentum with weight} and Young's inequality, one has
	\begin{equation}
	\begin{split}
	F'(T)&=\alpha T^2-I(T)+\int_{0}^{T}\int_{A(t)}\left(\frac{h {U}^2}{2}+\frac{h {U}^2}{r}+{h^2}\right)(T-t)^2 \mu(r)drdt,\\
	&\geq\int_{0}^{T}\int_{A(t)} \frac{3}{2}(h{U})^{4/3}(T-t)^2 e^{-r} \mu(r)drdt+\alpha T^2-I(T).
	\end{split}
	\end{equation}

	We claim that if $\Arrowvert U_0 \Arrowvert_{L^\infty}$ is sufficiently large, then
	\begin{equation}\label{claim1}
I(T)\leq\frac{4\alpha T^2}{5}\quad \text{for all } T\in [0,\bar{T}].
	\end{equation}
		The detailed  proof for \eqref{claim1} is given in Step 4.
		
{\it Step 3. Derivation of Riccati type inequality.} Assume that \eqref{claim1} is true and we continue the proof for the theorem.		
	Define	
	\[
	\phi(t)=\int_{A(t)}e^{-r}dr
	\quad \text{and}\quad \varphi(T)=\int_{0}^{T}(T-t)^2\phi(t)dt.
	\]
It follows from
from \eqref{conservation of radial momentum with weight}, \eqref{claim1}, and Jensen's inequality that one has
\begin{equation}\label{time intrgral of weighted moment}
F'(T)\geq\frac{\alpha T^2}{5}+\int_{0}^{T}(T-t)^2\int_{A(t)}\frac{3}{2}(h {U})^{4/3}e^{-r} \mu(r) drdt\geq\frac{\alpha T^2}{5}+\frac{3}{2}\varphi(T)\left(\frac{F(T)}{\varphi(T)}\right)^{4/3}.
\end{equation}

The straightforward computations give
\begin{equation}\label{defa}
\frac{\phi'(t)}{\phi(t)}=\sigma\dfrac{1+e^{-2\sigma t-0.5 }}{1-e^{-2\sigma t-0.5}}\leq \frac{\phi'(0)}{\phi(0)}= \sigma \frac{1+e^{-1/2}}{1-e^{-1/2}}:=a.
\end{equation}
Note that both $\phi$ and $\phi'$ are positive, thus  one has
\begin{equation}\label{middle step1}
\begin{split}
a\varphi(T)&\geq\int_{0}^{T}(T-t)^2\phi'(t)dt=-\phi(0)T^2+2\int_{0}^{T}(T-t)\phi(t)dt\\&=-\phi(0)T^2+\varphi'(T).
\end{split}
\end{equation}
Furthermore, according to L'Hopital rule,
\begin{equation}\label{assumption2}
\lim_{T\rightarrow 0}\frac{F(T)}{\varphi(T)}=\lim_{T\rightarrow 0}\frac{F'(T)}{\varphi'(T)}=\lim_{T\to 0}\frac{F'''(T)}{\varphi'''(T)} =\lim_{T\to 0} \frac{-\int_{A(T)} (hU)(r, T)e^{-r} \mu(r)dr}{\phi(T)}=\frac{\alpha}{\phi(0)}.
\end{equation}
If $\Arrowvert U_0 \Arrowvert_{\infty}$ is sufficiently large, then
\begin{equation}\label{assumption1}
\frac{3}{2}\left(\frac{\alpha}{5\phi(0)}\right)^{1/3}\geq a+3\nu,
\end{equation}
where $a$ is defined in \eqref{defa}.
For any $\zeta\in [\frac{8}{27} (a+3\nu)^3, \frac{\alpha}{5\phi(0)}]$,  there exists a $\tilde{T}>0$ such that  $F(t)>\zeta \varphi(t)$ for any $ t\in[0,\tilde{T}]$.

Next, we claim that for any $\zeta\in [\frac{8}{27} (a+3\nu)^3, \frac{\alpha}{5\phi(0)}]$,
\begin{equation}\label{estF}
F(T)\geq\zeta\varphi(T) \quad \text{for any } T \in [0,\bar{T}].
\end{equation}
Indeed, if $\zeta\in [\frac{8}{27} (a+3\nu)^3, \frac{\alpha}{5\phi(0)}]$, for any $T\in [0, \tilde{T}]$, then it follows from \eqref{time intrgral of weighted moment} and \eqref{assumption2}  that
\begin{equation}\label{assumption3}
\begin{split}
F'(T)\geq\frac{\alpha T^2}{5}+\varphi(T)\frac{3}{2}\zeta^{4/3}>\zeta \phi(0)T^2+\zeta a\varphi(T)\geq\zeta \varphi'(T).
\end{split}
\end{equation}
Denote
\[
\mcF(T)=\frac{F(T)}{\varphi(T)}.
\]
The straightforward computations give
\begin{equation}\label{eq3.11}
\begin{aligned}
\mcF^{'}( T)= &\frac{F^{'}(T)}{\varphi(T)}-\frac{\varphi^{'}(T)}{\varphi(T)}\mcF(T)
\geq \frac{\alpha T^2}{5\varphi(T)}-\frac{\varphi'(T)}{\varphi(T)} \mcF(T)+\frac{3}{2}(\mcF(T))^{4/3},
\end{aligned}
\end{equation}
where \eqref{time intrgral of weighted moment} is used.
If $\mcF$ achieves $\zeta$ at $\hat{T}\in [0, \bar{T}]$ for the first time, i.e.,
\[
\hat{T}=\inf\{ T\in [0, \bar{T}]: \mcF(T)=\zeta\}.
\]
It follows from  \eqref{middle step1} that one has
\begin{equation}\label{key estimate1}
\begin{split}
 \mcF'(\hat T) \geq\frac{a\mcF(\hat T)\varphi(\hat T)-\varphi'(\hat T)\mcF(\hat T)+\dfrac{\alpha \hat T^2}{5}}{\varphi(\hat T)}\geq \frac{\phi(0)\hat T^2 (\dfrac{\alpha}{5\phi(0)}-\zeta)}{\varphi(T)}>0.
\end{split}
\end{equation}
Thus, for any $\zeta\in [\frac{8}{27} (a+3\nu)^3, \frac{\alpha}{5\phi(0)}]$, the claim \eqref{estF} always holds, i.e.,
\begin{equation*}
	\mcF(T)\geq \zeta \quad \text{for any } T\in[0,\bar{T}].
\end{equation*}
In particular, one has
\begin{equation}\label{key estimate2}
\mcF(T)\geq\frac{\alpha}{5\phi(0)}\quad \text{for any } T\in[0,\bar{T}].
\end{equation}

Notice $\phi(t)$ is monotonically increasing, thus by \eqref{middle step1}, {for any } $T\geq\frac{\bar{T}}{2}$, one has
\begin{equation}\label{middle step2}
 \frac{\varphi'(T)}{\varphi(T)}\leq a+\dfrac{\phi(0)T^2}{\int_{0}^{T}\phi(t)(T-t)^2dt}\leq a+\frac{T^2}{\int_0^T (T-t)^2dt}\leq a+\frac{3}{T}  < a+3\nu .
\end{equation}
Combining with \eqref{eq3.11} gives
\begin{equation}\label{key estimate3}
\mcF'(T)\geq \frac{3}{2}\mcF^{4/3}(T)-(a+3\nu )\mcF(T)>0 \quad \text{for any }T\geq \frac{\bar{T}}{2}.
\end{equation}
This, together with \eqref{key estimate2}, yields
\begin{equation}\label{estimate}
\begin{aligned}
\frac{\bar{T}}{2}
\leq &
\int_{\mcF(\bar{T}/{2})}^{\mcF(\bar{T})}\frac{ds}{\frac{3}{2} s^{4/3}-(a+3\nu)s}
\leq& \int_{\frac{\alpha}{5\phi(0)}}^{+\infty}\frac{ds}{\frac{3}{2} s^{4/3}-(a+3\nu)s} \\
\leq &
\int_{\frac{\alpha}{5\phi(0)}}^{+\infty}\frac{ds}{s^{4/3}}\leq  C\left(\frac{\alpha}{5\phi(0)}\right)^{-\frac{1}{3}}
\end{aligned}
\end{equation}
provided  $\Arrowvert U_0 \Arrowvert_{L^{\infty}}$ is sufficiently large.
Therefore, we have
\begin{equation}
 	\Arrowvert U_0 \Arrowvert_{L^{\infty}}^{\epsilon-\frac{1}{3}}\leq C \frac{1}{2}\bar{T}\leq C\left(\frac{\alpha}{5\phi(0)}\right)^{-\frac{1}{3}}\leq C 	 \Arrowvert U_0\Arrowvert_{L^{\infty}}^{-\frac{1}{3}}.
 \end{equation}
This leads to a contradiction if $\Arrowvert U_0 \Arrowvert_{L^{\infty}}$ is sufficiently large. Hence the classical solution forms singularity before $t=\bar{T}$.
Therefore, it suffices to prove \eqref{claim1} in order to complete the proof of the theorem.

{\it Step 4. Proof of \eqref{claim1}.}
Note that $(h, U, V)(r, t)=(\bar h, 0, 0)$ for $r \in \mathbb{R}_+ \setminus A(t)$. Hence there exists a constant $C>0$ such that
\[ \frac{1}{C}\|U_0\|_{L^\infty}T^2 \leq \frac{4\alpha T^2}{5}\leq C \Arrowvert U_0 \Arrowvert_{L^\infty}T^2 \quad \text{and}\quad  I_3(T) \leq C T^3.
\]
 In order to complete the proof, it suffices to get the estimate of $I_1$ and $I_2$. This is established by combining the $L^\infty$ estimate for $V$ through Lagrangian form and the conservation of mass.
 Let
 \begin{equation}\label{eqparticlepath}
 \left\{
\begin{aligned}
&\frac{\partial \tilde{X}}{\partial t}=\tilde{U}(t, X_0),\\
& \tilde{X}(0, X_0)=X_0,
\end{aligned}
\right.
\end{equation}
where
\[
(\tilde{h},  \tilde{U}, \tilde{V}) (t,X_0)=(h, U, V)(t,\tilde{X}(t,X_0)).
\]
The Lagrangian form the system \eqref{rotation} is
\begin{equation}\label{lagrange}
\left\{
\begin{aligned}
&
\frac{\partial \tilde{X}}{\partial X_0}\left(\frac{\partial \tilde{h}}{\partial t} +\frac{\tilde{h}\tilde{U}}{\tilde{X}}\right)+\tilde{h}\frac{\partial \tilde{U}}{\partial X_0}=0,\\
&\frac{\partial \tilde{X}}{\partial X_0}\left(\frac{\partial \tilde{U}}{\partial t}-\frac{\tilde{V}^2}{\tilde{X}}-\tilde{V}\right)+\frac{\partial \tilde{h}}{\partial X_0}=0,
\\
& \frac{\partial \tilde{V}}{\partial t}+\frac{\tilde{V}\tilde{U}}{\tilde{X}}+\tilde{U}=0.
\end{aligned}
\right.
\end{equation}
The system \eqref{lagrange} has a $C^1$ solution  in
$[0,\bar{T}]\times \mathbb{R}_+$
 as long as the solutions of the system \eqref{rotation} belongs to  $C^1$ in
$[0,\bar{T}]\times \mathbb{R}_+$.

Firstly,  by the existence theorem of ODE theory, for any $r \in [0, \infty)$ and $t\in [0,\bar{T}]$, there exists an  $X_0$ such that $\tilde{X}(t, X_0)=r$. Moreover, note that  $A(\bar{T}) \subseteq \{r\mid \frac{1}{3}\leq r \leq \frac{4}{3} \}$, by the uniqueness theorem of ODE, one has
\begin{equation}\label{material}
    \left\{
    \begin{aligned}
    &\tilde{X}(T,X_0)=X_0 ,\,\,\tilde{V}(T,X_0)=0, \text{ for any }  X_0 \in \mathbb{R}^{+}\setminus \{r\mid \frac{1}{3}\leq r \leq   \frac{4}{3} \},\\
    & \frac{1}{3}\leq \tilde{X}(t,X_0) \leq \frac{4}{3},\,\,\text{for any } X_0\in \{r\mid \frac{1}{3}\leq r \leq \frac{4}{3} \}
     \end{aligned}
   \right.
\end{equation}
Hence one needs only to get the upper bound of $\tilde{V}(T,X_0)$ for $X_0\in \{r\mid \frac{1}{3}\leq r \leq \frac{4}{3} \}.$
It follows from \eqref{eqparticlepath} and the third equation of \eqref{lagrange} that one has
\[\frac{\partial(\tilde{X}\tilde{V}+\frac{\tilde{X}^2}{2})}{\partial t}=0.
\]
 Thus
\begin{equation}\label{rotation velocity}
    \tilde{V}=\frac{V_0(X_0)X_0+\frac{X_0^2}{2}-\frac{\tilde{X}^2}{2}}{\tilde{X}}.
\end{equation}
This, together with \eqref{material}, yields that  there exists a constant $C>0$ such that
 \[
 -C\leq\tilde{V}(T,X_0)\leq C  \text{ for any } X_0\in \{r\mid \frac{1}{3}\leq r \leq \frac{4}{3} \}.
 \]
Since the compact support of V is contained in $ A(t)\subset  \{r\mid \frac{1}{3}\leq r \leq \frac{4}{3} \}$, one has
\begin{equation}\label{upper bound for rotational velocity}
\Arrowvert V(T)\Arrowvert_{L^{\infty}} \leq C.
\end{equation}
Furthermore, it follows from \eqref{eqparticlepath} and the first equation in \eqref{lagrange} that one has
\[
\dfrac{\partial}{\partial t}\left(\tilde{h}\tilde{X} \frac{\partial \tilde{X}}{\partial X_0}\right)=0.
\]
This gives
\begin{equation}\label{height}
\tilde{h}(T,X_0)=\frac{h_0(X_0)X_0}{\tilde{X}\frac{\partial \tilde{X}}{\partial X_0}}.
\end{equation}
Differentiating \eqref{eqparticlepath} with respect to $X_0$ yields
that $\frac{\partial \tilde{X}}{\partial X_0}\geq 0$ for any $ t\in [0,\bar{T}]$.
This, together with \eqref{height}, shows that
	$h\geq 0$ for any $t\in [0,\bar{T}]$.
As was proved in  Theorem \ref{thm2D1}, the mass
\[
m(t)=\int_{\mathbb{R}^2} h-\bar{h} dxdy =\int_0^\infty (h(r, t)-\bar h) rdr =\int_{A(t)} (h-\bar h) rdr
\]
is conserved. Note that $A(t) \subset [1/3, 4/3]$. Hence there exists a constant $C>0$ such that
\begin{equation*}
I_1(T)\leq \sqrt{C} \int_{0}^{T}\int_{A(t)}\frac{h(T-t)^2}{r}  drdt\leq C \int_{0}^{T}\int_{A(t)} h(T-t)^2r drdt \leq C T^3.
\end{equation*}
Similarly, one has $I_2(T)\leq C T^3$ for some constant $C>0$.
Therefore,  if $\Arrowvert U_0 \Arrowvert_{L^\infty}$ is large enough, the estimate \eqref{claim1} must hold.

Hence the proof of the theorem is completed.
\end{proof}
\begin{remark}
In fact, we can replace $\bar T$ in \eqref{defnu} by $\bar{T}=\frac{N}{\Arrowvert U_0 \Arrowvert_{L^\infty}^{\frac{1}{3}}}$ with sufficiently large constant $N$. Indeed, we need only to replace  $a+3\mu$   in \eqref{assumption1} by  $a+\frac{6\mu}{N}$.   If $N$ is sufficiently large, one can also get contradiction for \eqref{estimate} and the proof can be proceeded with some minor modifications.
\end{remark}

\section{The solutions of separated variable form}\label{sec4}
In this section, we give the detailed description for Theorem \ref{mainthm3} and its proof.

 Note the system \eqref{rotation} enjoys the scaling symmetry, i.e.,  if $(h,U, V)$ is the solution of the original system \eqref{rotation}, then $(h_\lambda, V_\lambda, U_\lambda)$
defined by
\begin{equation*}
\begin{split}
h_\lambda(t,r)=\lambda^2h(\frac{r}{\lambda}, t), \quad U_\lambda(t,r)=\lambda U(\frac{r}{\lambda},t),\quad V_\lambda(t,r)= \lambda V(\frac{r}{\lambda},t)
\end{split}
\end{equation*}
is also a solution of \eqref{rotation}.
Suppose that the solution to \eqref{rotation} enjoys the form \eqref{separation}.
In order to preserve the scaling symmetry of the system, $\mfa_3(r)$ should satisfy
\begin{equation*}
\mfa_3(r)=\lambda \mfa_3(\frac{r}{\lambda}).
\end{equation*}
Hence there exists an $l$ such that $\mfa_3(r)=l r$. Without loss of generality, one may assume $l=1$ so that $V=r\mfv(t)$.

First, we have the following key property for $\mfv(t)$.
\begin{lemma}
Either $\mfv \equiv -\frac{1}{2}$ or 	$\mfv\neq -\frac{1}{2}$.
\end{lemma}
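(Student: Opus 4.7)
The plan is to derive a linear ODE for $w(t) := 2\mfv(t)+1$ from the third equation of \eqref{rotation} and then invoke uniqueness for linear first-order ODEs. Substituting the separated ansatz $V(r,t) = r\mfv(t)$ together with $U(r,t) = \mfu(t)\mfa_2(r)$ into $\partial_t V + U\partial_r V + U + VU/r = 0$, a short calculation collapses the four terms into the single identity
$$r\mfv'(t) + \mfu(t)\mfa_2(r)(2\mfv(t)+1) = 0.$$

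Since this identity must hold for every $(r,t)$ in the domain while $\mfv'(t)$ is independent of $r$, a standard separation-of-variables argument forces $\mfa_2$ to be proportional to $r$; write $\mfa_2(r)=cr$ for some constant $c$. (The degenerate subcases $\mfu\equiv 0$ or $c=0$ make the right-hand side vanish and immediately yield $\mfv'\equiv 0$, so $\mfv$ is constant and the dichotomy holds trivially.) In the nondegenerate case the identity reduces to the scalar equation
$$\mfv'(t) = -c\mfu(t)(2\mfv(t)+1),$$
or equivalently $w'(t) = -2c\mfu(t)w(t)$, which is a linear homogeneous first-order ODE for $w$.

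The conclusion is now immediate from the explicit formula $w(t) = w(0)\exp\!\bigl(-2c\int_0^t \mfu(s)\,ds\bigr)$: either $w(0)=0$, in which case $w\equiv 0$ and hence $\mfv\equiv -\frac{1}{2}$, or $w(0)\neq 0$, in which case $w(t)\neq 0$ for every $t$ and so $\mfv(t)\neq -\frac{1}{2}$ everywhere. The only mildly subtle point in the argument is the separation-of-variables reduction that pins down $\mfa_2(r)=cr$ and isolates the degenerate subcases; once that reduction is in place, the dichotomy is a one-line consequence of ODE uniqueness.
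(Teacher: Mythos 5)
Your proof is correct, and it takes a genuinely different route from the paper. The paper defines the relative potential vorticity $\Theta = \bigl(\partial_r V + V/r + 1\bigr)/h$, observes that it is transported by the flow ($\partial_t\Theta + U\partial_r\Theta = 0$), and then notes that under $V = r\mfv(t)$ one has $\Theta = (2\mfv+1)/h$, so that $\Theta$ (and hence $2\mfv+1$) either vanishes identically or never vanishes. You instead substitute the separated ansatz directly into the third equation, obtain $r\mfv'(t) + \mfu(t)\mfa_2(r)\bigl(2\mfv(t)+1\bigr) = 0$, use separation of variables to pin down $\mfa_2(r)=cr$ (handling the degenerate cases where $U\equiv 0$ or $\mfv'\equiv 0$), and then apply uniqueness for the resulting linear scalar ODE $w'=-2c\mfu w$ with $w=2\mfv+1$. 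Both arguments are valid. The paper's approach is conceptually cleaner in that it invokes a known conserved quantity of the shallow water system and requires no reduction of $\mfa_2$; yours is more elementary and self-contained, relying only on the angular-momentum equation and basic ODE theory, and as a byproduct it derives $\mfa_2(r)=cr$, a fact the paper uses implicitly in \eqref{ODE1} but does not state at this point. One small point to tighten: your parenthetical handles the cases $\mfu\equiv 0$ or $c=0$, but you should also note explicitly that if $\mfv'\equiv 0$ with $U\not\equiv 0$ then the identity forces $2\mfv+1\equiv 0$, i.e.\ $\mfv\equiv -\tfrac12$, so that every possibility really does land in one of the two alternatives.
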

\begin{proof}
Let
\[
\Theta(r,t)=\dfrac{\dfrac{\partial V}{\partial r}+\dfrac{V}{r}+1}{h}\]
  be the relative vorticity of the fluid. One has
		\begin{equation}
	\dfrac{\partial \Theta}{\partial t}+ U \partial_r \Theta=0.
	\end{equation}
	This implies that $\Theta$ is  invariant along the particle path. If $V=r \mfv(t)$, then
\[
\Theta(r,t)=\dfrac{2\mfv+1}{h}.
\]
	Thus $\mfv$ is always $-\frac{1}{2}$ or is never $-\frac{1}{2}$.
	\end{proof}
	
	The case for $\mfv \equiv -\frac{1}{2}$ has been studies in  \cite{CHESN}. It was proved in \cite{CHESN} that if $\mfv\equiv -\frac{1}{2}$, then  the solution of \eqref{rotation}  is either a time periodic solution or a steady solution. The goal of this section is to investigate the case where  $\mfv\neq -\frac{1}{2}$.

If $\mfv\neq -\frac{1}{2}$, then one may assume that
$V=r(e^{g(t)}-\frac{1}{2})$ (or $V=r(-e^{g(t)}-\frac{1}{2})$).
Substituting $V$ into \eqref{rotation} yields
\begin{equation}\label{ODE1}
\left\{
\begin{aligned}
&h=\frac{r^2}{2}\left(e^{2g}-\frac{1}{4}+\frac{1}{2}\frac{d^2g}{dt^2}-\frac{1}{4}\left(\dfrac{dg}{dt}\right)^2 \right),\\
&U=\frac{-r}{2}\frac{dg}{dt},\quad V =r(e^{g(t)}-\frac{1}{2}),
\end{aligned}
\right.
\end{equation}
where $g$ satisfies
\begin{equation}\label{ODE2}
\frac{d^3g}{dt^3}-3\frac{dg}{dt}\frac{d^2g}{dt^2}+\frac{dg}{dt}+\left(\frac{dg}{dt}\right)^3=0.
\end{equation}
The ODE  \eqref{ODE2} can be written as the following ODE system
\begin{equation}\label{ODE3}
\left\{
\begin{aligned}
&\frac{d\xi}{dt}=\eta,\\&
\frac{d\eta}{dt}=\xi(3\eta-\xi^2-1),
\end{aligned}
\right.
\end{equation}
where
\[
\xi=\frac{dg}{dt} \quad \text{and}\quad \eta=\frac{d^2g}{dt^2}.
\]
Define
\begin{equation}\label{deftheta}
\vartheta(t) := \xi^2+1-\eta\quad \text{and}\quad \kappa(t):=\frac{\xi^2+1-2\eta}{(\xi^2+1-\eta)^2} \text{ if } \vartheta\neq 0.
\end{equation}
These two quantities play a crucial role for the study on the ODE system \eqref{ODE3}.

\begin{lemma}\label{lemma 1}
If $\vartheta(0)=0$, then
\begin{equation}\label{explicit1}
\xi=\tan(t+C)\quad \text{and}\quad \eta=\sec^2(t+C).
\end{equation}
If $\vartheta(0)\neq 0$, then  $\kappa(t)\equiv \kappa(0):=\kappa_0\in (-\infty, 1]$ for all $t>0$.
\end{lemma}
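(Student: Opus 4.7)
The plan is to observe that $\vartheta$ satisfies a linear homogeneous ODE, so $\{\vartheta=0\}$ is invariant under the flow, and then handle the two cases separately.

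First I would differentiate $\vartheta=\xi^2+1-\eta$ along trajectories of \eqref{ODE3}:
\begin{equation*}
\vartheta'=2\xi\xi'-\eta'=2\xi\eta-\xi(3\eta-\xi^2-1)=\xi(\xi^2+1-\eta)=\xi\vartheta.
\end{equation*}
Hence $\vartheta(t)=\vartheta(0)\exp\!\left(\int_0^t\xi(s)\,ds\right)$, so $\vartheta(0)=0$ forces $\vartheta\equiv 0$ and $\vartheta(0)\neq 0$ forces $\vartheta(t)\neq 0$ for all $t$ in the existence interval.

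In the first case $\vartheta\equiv 0$ gives $\eta=\xi^2+1$, so the first equation of \eqref{ODE3} reduces to $\xi'=\xi^2+1$, whose general solution is $\xi=\tan(t+C)$; then $\eta=\xi^2+1=\sec^2(t+C)$ as claimed. I would include a quick consistency check that this pair satisfies the second equation of \eqref{ODE3}.

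In the second case, the computation is essentially a ratio of two quantities governed by the same multiplicative factor $\xi$. Differentiating the numerator of $\kappa$ along \eqref{ODE3}:
\begin{equation*}
(\xi^2+1-2\eta)'=2\xi\eta-2\xi(3\eta-\xi^2-1)=2\xi(\xi^2+1-2\eta),
\end{equation*}
while $(\vartheta^2)'=2\vartheta\cdot\xi\vartheta=2\xi\vartheta^2$. The quotient rule then gives
\begin{equation*}
\kappa'=\frac{2\xi(\xi^2+1-2\eta)\vartheta^2-(\xi^2+1-2\eta)\cdot 2\xi\vartheta^2}{\vartheta^4}=0,
\end{equation*}
so $\kappa(t)\equiv\kappa(0)=:\kappa_0$.

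Finally, I would verify $\kappa_0\leq 1$ by a one-variable calculus argument: writing $p:=\xi^2+1\geq 1$ and viewing $\kappa=(p-2\eta)/(p-\eta)^2$ as a function of $\eta$ for fixed $p$, the critical point is at $\eta=0$ where $\kappa=1/p\leq 1$; near the singularity $\eta=p$ one has $\kappa\to-\infty$, and $\kappa\to 0$ as $|\eta|\to\infty$. Hence the supremum of $\kappa$ over $\{\vartheta\neq 0\}$ is $1$, attained only at $(\xi,\eta)=(0,0)$, and every value in $(-\infty,1]$ is realized. I do not expect any real obstacle here; the only thing to be careful about is the sign in the quotient rule and the observation that the common factor $\xi\vartheta^2$ in numerator of $\kappa'$ cancels exactly, which is exactly why $\kappa$ is a conserved quantity of \eqref{ODE3}.
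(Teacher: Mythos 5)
Your proof is correct. It rests on the same underlying mechanism as the paper's -- namely, the conservation of $\kappa$ along trajectories of \eqref{ODE3} -- but you organize the first part more transparently. The paper handles $\vartheta(0)=0$ by observing that $1/\kappa=(\xi^2+1-\eta)^2/(\xi^2+1-2\eta)$ is conserved and equals $0$ at $t=0$, which implicitly requires keeping track of the denominator; your derivation of the linear ODE $\vartheta'=\xi\vartheta$ (and hence $\vartheta(t)=\vartheta(0)\exp\int_0^t\xi$) shows directly that the sets $\{\vartheta=0\}$ and $\{\vartheta\neq 0\}$ are invariant, sidestepping that issue. Your companion computation $(\xi^2+1-2\eta)'=2\xi(\xi^2+1-2\eta)$ together with $(\vartheta^2)'=2\xi\vartheta^2$ also makes it immediate \emph{why} $\kappa'\equiv 0$ (both numerator and denominator obey the same homogeneous linear equation), where the paper simply asserts it. Finally, you fill in the one-variable calculus argument, with the critical point at $\eta=0$ giving $\kappa=1/p\leq 1$ for $p=\xi^2+1\geq 1$, that the paper dismisses as ``straightforward computations.'' All the algebra checks out: $\vartheta'=2\xi\eta-\xi(3\eta-\xi^2-1)=\xi\vartheta$, the consistency of $\xi=\tan(t+C)$, $\eta=\sec^2(t+C)$ with both equations of \eqref{ODE3}, and the quotient-rule cancellation for $\kappa'$ are all correct.
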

\begin{proof}
If $\vartheta(0)=0$, then $\eta=\xi^2+1\neq 0$. Hence $\xi^2+1-2\eta =-(\xi^2+1)\neq 0$.  Furthermore, the straightforward computations yield
\[
\frac{d}{dt}\left(\frac{(\xi^2+1-\eta)^2}{\xi^2+1-2\eta} \right)=0.
\]
Thus one has $\vartheta(t)\equiv 0$. Hence the first equation in \eqref{ODE3} can be written as
\[
\frac{d\xi}{dt} =\xi^2+1.
\]
Hence $\xi =\tan (t+C)$. Using the property $\vartheta\equiv 0$ gives the explicit form of $\eta$ as that in \eqref{explicit1}.

If $\vartheta(0)\neq 0$, then $\kappa(t)\equiv \kappa(0)$ follows from $\kappa'(t)\equiv 0$. Furthermore, the straightforward computations show that  $\kappa_0 \in (-\infty,1]$.
\end{proof}

We have the following result, which is a more detailed version for Theorem \ref{mainthm3}.
\begin{theorem}\label{mainthm3r}
Suppose that the solution $(h, U, V)$ of the system \eqref{rotation} is under the form \eqref{ODE1}, then the following statements hold.
\begin{enumerate}
\item 	If $\kappa_0 \in (0,1]$, the solution of \eqref{ODE3} is periodic  with period $2\pi$. The time  periodic solutions for \eqref{rotation} have periodic particle path.
\item 	 If $\kappa_0\in (-\infty,0]$, the  solutions  of the system \eqref{ODE3} blow up in finite time.
\item  While in the blow up case, every particle path converges to the origin, and along the particle path $h$, $-U$, and $V$ go to $+\infty$ as the time goes to the blowup time. More precisely, there exists a constant $C$ such that the following blowup rates hold,
\begin{equation}\label{blowuprate}
 \frac{C^{-1}}{t_0-t}\leq \sup_{r\in \mathbb{R}_+}|h(t, r)|, \sup_{r\in \mathbb{R}_+}|U(t, r)|^2, \sup_{r\in \mathbb{R}_+}|V(t, r)|^2 \leq  \frac{C}{t_0-t}\quad\text{as }t\rightarrow t_0,
 \end{equation}
 where $t_0$ is the blowup time.
\end{enumerate}
\end{theorem}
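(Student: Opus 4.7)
The plan is to reduce the nonlinear system \eqref{ODE3} to a linear second-order ODE via two substitutions; thereafter everything becomes elementary. First, computing along trajectories,
\begin{equation*}
\vartheta' = 2\xi\eta - \xi(3\eta-\xi^2-1) = \xi(\xi^2+1-\eta) = \xi\vartheta,
\end{equation*}
and since $\xi = g'$ this integrates to $\vartheta(t) = Ce^{g(t)}$ for a constant $C = \vartheta(0)e^{-g(0)}$. The definition of $\vartheta$ now reads $g'' = (g')^2+1-Ce^g$, and substituting $u = e^{-g}$ converts this into
\begin{equation*}
u'' + u = C.
\end{equation*}
This reduction is the crux of the proof: the nonlinear dynamics of \eqref{ODE3} is replaced by a harmonic oscillator with constant forcing.

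The general solution is $u(t) = C + D\cos(t-\tau)$ for constants $D,\tau$, and the formula for $\kappa_0$ in \eqref{deftheta} collapses, via the identity $\kappa_0 C^2 = 2Cu - (u')^2 - u^2$ (which follows from $\vartheta = C/u$ and $\eta = (g')^2+1-C/u$), to
\begin{equation*}
\kappa_0 = 1 - \frac{D^2}{C^2}.
\end{equation*}
Hence $\kappa_0 \in (0,1]$ is exactly $|D| < |C|$, in which case $u$ is strictly positive and $2\pi$-periodic. Then $g = -\ln u$ is $2\pi$-periodic, and through \eqref{ODE1} each of $h,U,V$ is $2\pi$-periodic. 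Integrating the particle-path equation $dX/dt = U = -Xg'/2$ gives $X(t) = X_0\sqrt{u(t)/u(0)}$, which is also $2\pi$-periodic, proving item (1).

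For items (2) and (3), when $\kappa_0 < 0$ one has $|D| > |C|$, so there is a first time $t_0 > 0$ with $u(t_0) = 0$, and this zero is transversal ($u'(t_0)\neq 0$); thus $u(t) \sim |u'(t_0)|(t_0-t)$ as $t\to t_0^-$. Since $X(t)=X_0\sqrt{u(t)/u(0)}\to 0$, every particle path converges to the origin. Substituting the asymptotics of $u$ into \eqref{ODE1} along a particle path yields
\begin{equation*}
\tilde h(t)\sim \frac{X_0^2}{2u(0)u(t)}, \qquad \tilde U(t)^2 \sim \frac{X_0^2(u'(t_0))^2}{4u(0)u(t)}, \qquad \tilde V(t)^2 \sim \frac{X_0^2}{u(0)u(t)},
\end{equation*}
each of order $(t_0-t)^{-1}$, giving both sides of \eqref{blowuprate}.

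The only subtlety I anticipate is the borderline case $\kappa_0 = 0$ (equivalently $|D|=|C|$), where $u$ has a double zero at the blowup time and $u(t)\sim (t_0-t)^2$; the solution still ceases to exist in finite time, so item (2) is unaffected, but item (3) either needs to be interpreted with a modified rate at this endpoint or handled by a short separate argument. Apart from this, the proof is fully explicit once $u'' + u = C$ has been established.
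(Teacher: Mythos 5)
Your proof takes a genuinely different, and cleaner, route than the paper's. The paper works directly with the nonlinear first-order relation \eqref{ODE4}, namely $(\vartheta')^2=\vartheta^2(2\vartheta-\kappa_0\vartheta^2-1)$, establishing the $2\pi$ period of item (1) by evaluating $\int_{\bar\vartheta}^{\hat\vartheta}\frac{d\vartheta}{\vartheta\sqrt{2\vartheta-\kappa_0\vartheta^2-1}}=\pi$, blowup in item (2) by monotonicity and the comparison $\vartheta'\geq\sqrt{-\kappa_0}\,\vartheta^2$, and the rates in item (3) by explicitly inverting $\vartheta$. Your observation that $\vartheta'=\xi\vartheta$ (hence $\vartheta=Ce^{g}$) followed by the substitution $u=e^{-g}$ replaces all of this with the constant-forced harmonic oscillator $u''+u=C$, from which the energy integral $(u')^2+u^2-2Cu=-\kappa_0 C^2$ and the characterization $\kappa_0=1-D^2/C^2$ fall out immediately. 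This makes the $2\pi$ period, the positivity dichotomy, the particle-path formula $X(t)=X_0\sqrt{u(t)/u(0)}$, and the blowup asymptotics all elementary. It is the same dynamics, but the linearization is an explicit and illuminating step the paper does not make; your version is more transparent.

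You are also right to flag the endpoint $\kappa_0=0$, and this is in fact a genuine discrepancy with the statement of \eqref{blowuprate}. In your notation, $\kappa_0=0$ means $|D|=|C|$, so $u$ has a \emph{double} zero at $t_0$ and $u(t)\sim\tfrac{1}{4}u''(t_0)(t_0-t)^2$, giving $h\sim (t_0-t)^{-2}$ rather than $(t_0-t)^{-1}$. The paper's Step~5 appears to miss this: from $\vartheta=\tfrac12\sec^2((t+C)/2)$ one has $\cos^2((t+C)/2)=\tfrac{1}{2\vartheta}$, hence $t_0-t=2\arcsin\tfrac{1}{\sqrt{2\vartheta}}$ (not $2\arcsin\tfrac{1}{2\vartheta}$), which yields $\vartheta(t)(t_0-t)^2\to 2$, i.e.\ $\vartheta\sim 2(t_0-t)^{-2}$, consistent with your $u\sim(t_0-t)^2$. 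So for $\kappa_0<0$ your asymptotics reproduce the stated $(t_0-t)^{-1}$ bound, but the boundary case $\kappa_0=0$ needs either a separate exponent or exclusion from item (3). One smaller point to record when writing this up: one must justify $C=\vartheta(0)u(0)>0$ in the periodic case (this follows from $2\vartheta-\kappa_0\vartheta^2-1\geq 0$ forcing $\vartheta>0$ when $\kappa_0>0$), since $u>0$ together with $|D|<|C|$ only forbids a zero of $u$ when $C>0$.
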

\begin{proof} The proof of the theorem is divided into five steps.

{\it Step 1.  Preliminaries.}	 Clearly,  it follows from \eqref{deftheta} that
	\begin{equation}\label{eq4.8}
	\left\{
	\begin{aligned}
	&\xi^2+1=2\vartheta-\kappa_0\vartheta^2\\&
	\eta=\vartheta-\kappa_0\vartheta^2.\\
	\end{aligned}
	\right.
	\end{equation}
	Furthermore, the straightforward computations give
			\begin{equation}\label{ODE4}
	\left\{
	\begin{aligned}	 &\left(\frac{d\vartheta}{dt}\right)^2=\vartheta^2(2\vartheta-\kappa_0 \vartheta^2-1).\\&
	\frac{d\vartheta}{dt}(0)=\xi(0)(\xi^2(0)+1-\eta(0))=\xi(0)\vartheta(0)
	\end{aligned}
	\right.
	\end{equation}

{\it Step 2.  Time periodic solutions.}	
If $\kappa_0\in (0,1]$ and  $\xi(0)\geq 0$, it follows from the first equation in \eqref{eq4.8} that
\[
\vartheta =\frac{1\pm \sqrt{1-(\xi^2+1) \kappa_0}}{\kappa_0}.
\]
Hence $\vartheta\in [\bar\vartheta, \hat\vartheta]$ where
	\begin{equation}\label{defbartheta}
	\bar\vartheta= \frac{1-\sqrt{1-\kappa_0}}{\kappa_0}\quad \text{and}\quad \hat\vartheta= \frac{1+\sqrt{1-\kappa_0}}{\kappa_0}.
	\end{equation}
Furthermore,  there exists a $t'>0$ such that
	\begin{equation}\label{ODE5}
	\frac{d\vartheta}{dt}=\vartheta \sqrt{2\vartheta-\kappa_0 \vartheta^2-1} \quad \text{ for } t\in [0, t').
	\end{equation}

	It follows from the  the standard ODE theory that  there exists  a unique solution to \eqref{ODE5}.
The straightforward calculations yield
	\[
	\int_{\bar\vartheta}^{\hat\vartheta}\frac{1}{\vartheta \sqrt{2\vartheta-\kappa_0 \vartheta^2-1}}d\vartheta=\pi.
	\]
Thus	  $\vartheta$ attains $\hat\vartheta$ in finite time. Furthermore,
\[
\frac{d\vartheta}{dt}=0\quad \text{and}\quad  \frac{d^2\vartheta}{dt^2}=\vartheta(3\vartheta-2\kappa_0\vartheta^2-1)=\vartheta(1-\vartheta)<0\quad \text{at}\,\, \vartheta=\hat{\vartheta}.
 \]
  Similarly, $\vartheta$ attains $\bar\vartheta$ in finite time and
 \[
 \frac{d\vartheta}{dt}=0,
 \quad \frac{d^2\vartheta}{dt^2}=\vartheta(3\vartheta-2\kappa_0\vartheta^2-1)=\vartheta(1-\vartheta)>0\quad \text{at}\,\, \vartheta=\bar{\vartheta}.
 \]
Therefore,  there is a global solution $\vartheta$ for the ODE and $\vartheta$ oscillates between $\bar\vartheta$ and $\hat\vartheta$.  Thus the corresponding solution is periodic with given period.
	The case $\xi(0)<0$ is similar for $0<\kappa_0\leq 1$.
	
The particle path is governed by the following equation
\begin{equation}
\left\{
\begin{aligned}
&\frac{d\tilde{X}(t,x)}{dt}=U(t, \tilde{X}(t,x))=-\dfrac{1}{2}\dfrac{dg}{dt}(t)\tilde{X}(t,x),\\
& \tilde{X}(0,x)=x.
\end{aligned}
\right.
\end{equation}
This yields
\begin{equation}\label{explicit3}
	\tilde{X}(t,x)=e^{\frac{g(0)-g(t)}{2}}x.
\end{equation}
Then the periodic solution corresponding to periodic particle path follows immediately.
		
{\it Step 3. Blowup of the solutions.}  If $\kappa_0=0$, then $\eta=(\xi^2+1)/2$. Hence it follows from the first equation in \eqref{ODE3} that one  has
\[
\frac{d\xi}{dt}=\frac{\xi^2+1}{2}.
\]
This yields that
\begin{equation}\label{explicit2}
\xi=\tan\left(\frac{t+C}{2}\right)\quad \text{and}\quad \eta=\frac{1}{2}\sec^2\left(\frac{t+C}{2}\right).
\end{equation}

If $\kappa_0<0$, $\xi(0)\geq 0$,  and $\vartheta(0)> 0$,
	then one has
	\[
	 \frac{d\vartheta}{dt}=\vartheta \sqrt{2\vartheta-\kappa_0 \vartheta^2-1}>0.
	 \]
	 Thus $\vartheta$ is monotone increasing with respect to $t$. It is easy to prove that $\frac{d\vartheta}{dt}$ is monotone with respect to $t$. Therefore, $\vartheta(t)$ doesn't have an upper bound. Hence one may assume $\vartheta(t)\geq \frac{1}{2}$  for all $t \geq t_*$. Therefore,  $\frac{d\vartheta}{dt}\geq\sqrt{-\kappa_0}\vartheta^2$  for any $t \geq t_*$. Hence the solution blows up in finite time.
	
	If $\vartheta(0)<0$, the one  can similarly prove that  $\vartheta$ attains $\frac{1-\sqrt{1-\kappa_0}}{\kappa_0}$ in finite time and blows up in finite time.
	
  For the case that $\kappa_0<0$, $\xi(0)<0$, by the similar analysis, one can show that $\vartheta$ attains $\frac{-1+\sqrt{1-\kappa_0}}{\sqrt{-\kappa_0}}$ in finite time and goes to $\infty$ in finite time when $\vartheta(0)>0$ or $\vartheta$ attains $\frac{-1-\sqrt{1-\kappa_0}}{\sqrt{-\kappa_0}}$ in finite time and goes to $-\infty$ in finite time when $\vartheta(0)<0$.

{\it Step 4. Blowup quantities.}  If $\kappa_0<0$, then we claim that
 \begin{equation}\label{blowupg}
 \lim_{t\rightarrow t_0}g(t)=+\infty
 \end{equation}
 where $t_0$ is the blowup time.

 If $\kappa_0 =0$ or $\vartheta\equiv 0$, it follows from \eqref{explicit1}  and $\int_{0}^{\frac{\pi}{2}} \tan t dt=+\infty$.

 For $\kappa_0 < 0$, we only present the case for $\xi(0)>0$. The case for $\xi(0)\leq 0$ can be handled similarly.  Without loss of generality, we may assume $\vartheta(0)\geq 1$.
Take
\[
t_0=\int_{\vartheta(0)}^{{\infty}}\frac{1}{s \sqrt{2s-\kappa_0 s^2-1}}ds.
\]
It follows from \eqref{ODE5} that one has
\begin{equation}
\frac{d}{dt}\ln(\vartheta(t))=\sqrt{2\vartheta-\kappa_0\vartheta^2-1}.
\end{equation}
This, together with \eqref{ODE4}, yields
 \begin{equation}\label{eq417}
    g(t)-\ln(\vartheta(t))=g(0)-\ln(\vartheta(0)).
\end{equation}
As $\vartheta(t)$ tends to $+\infty$ when $t \rightarrow t_0$, thus the claim \eqref{blowupg} holds.
By \eqref{ODE1} and \eqref{explicit3}, along the particle path, one has
\begin{equation*}
    U(\tilde{X}(x,t))=-\dfrac{g'(t)x}{2e^{(g(t)-g(0))/2}}.
\end{equation*}
Using L'Hopital's rule gives
\begin{equation}\label{blowupU}
\begin{aligned}
\lim_{t\rightarrow t_0}U(t, \tilde{X}(x,t))=\lim_{t\rightarrow t_0} -\dfrac{g'(t)x}{2e^{(g(t)-g(0))/2}}=\lim_{t\rightarrow t_0}-xe^{\frac{g(0)}{2}}\dfrac{\sqrt{2\vartheta-\kappa_0\vartheta^2-1}}{2e^{\frac{g(0)-\ln(\vartheta(0))}{2}}\sqrt{\vartheta}}=-\infty.
\end{aligned}
\end{equation}
Similarly, one has
\begin{equation}\label{blowupV}
\lim_{t\rightarrow t_0}V(t, \tilde{X}(x,t))=\lim_{g(t)\rightarrow +\infty}xe^{\frac{g(0)-g(t)}{2}}(e^g(t)-\frac{1}{2})=+\infty.
\end{equation}

It follows from \eqref{ODE1} and \eqref{explicit3} that it holds
\begin{equation}\label{blowuph}
\begin{aligned}
    h(t, \tilde{X}(x,t))= &x^2e^{g(0)-g(t)}(e^{2g(t)}-\frac{1}{4}+\eta(t)-\frac{\xi(t)^2}{4})\\
    = &x^{2}e^{g(0)}(e^{g(t)}+\frac{-\kappa_0\vartheta^2(t)}{4e^{g(t)}})\geq x^{2}e^{g(0)+g(t)}.
    \end{aligned}
\end{equation}
Thus one has
$\lim_{t\rightarrow t_0}h(X(x,t))=+\infty$.

{\it Step 5. Blowup rates.}
In fact, the study for the blow up rate for $\vartheta$ can also be regarded as  an alternative proof for the blowup, where the particle path converges to the origin. For simplicity, we assume that $\vartheta(0)>0$ and $\xi(0)>0$.
If $\kappa_0=0$, then it follows from \eqref{explicit2} that one has
\[
\vartheta(t)=\xi^2+1-\eta =\frac{1}{2}\sec^2 \left(\frac{t+C}{2}\right).
\]
Hence $t_0=\pi -C$ and
\[
t_0-t= 2\arcsin \frac{1}{2\vartheta(t)}.
\]
 This shows
\[
\lim_{t\rightarrow t_0}\vartheta(t)(t_0-t)=\lim_{\vartheta\rightarrow \infty}2\vartheta\arcsin{\frac{1}{2\vartheta}}=1.
\]

For $\kappa_0<0$,  one has
\begin{equation}
t_0-t=2\left( \arctan \left(\sqrt{-\frac{\hat{\vartheta}}{\bar{\vartheta}}} \sqrt{\frac{1-\bar{\vartheta}/\vartheta}{1 -\hat{\vartheta}/\vartheta}}\right)  -\arctan\sqrt{-\frac{\hat{\vartheta}}{\bar{\vartheta}}}\right),
\end{equation}
where $\hat\vartheta$ and $\bar\theta$ are defined in \eqref{deftheta}.
Then by L'Hopital's rule, there exists a constant $C$ such that
\[
 \frac{C^{-1}}{t_0-t}\leq \vartheta(t)\leq  \frac{C}{t_0-t}\quad\text{as }t\rightarrow t_0.
 \]
Therefore, this, together with the explicit representations of $h$, $U$, $V$ in terms of $\theta$ in \eqref{eq417}-\eqref{blowuph}, yields \eqref{blowuprate}.

This finishes the proof of the theorem.
\end{proof}

\medskip

{\bf Acknowledgement.}
The research of Xie was partially supported by  NSFC grants 11971307 and 11631008.

\medskip

\end{document}